\newtheorem{thm}{Theorem}
\newtheorem{cor}{Corollary}[thm]
\newtheorem{lem}{Lemma}
\newtheorem{definition}{Definition}
\newtheorem{prop}{Proposition}
\newtheorem{example}{Example}
\newcommand{\R}{\mathbb{R}}
\newcommand{\N}{\mathbb{N}}
\newcommand{\union}{\bigcup}
\NewDocumentCommand{\ifs}{O{\mathcal{F}}}{\mathcal {#1}}
\NewDocumentCommand{\HB}{O{\operatorname{F}}}{\operatorname{#1}} % hutingson barsley oportor
\DeclarePairedDelimiter\roundBracket{\lparen}{\rparen}
\newcommand{\round}{\roundBracket*}
\DeclarePairedDelimiter\multiBracket{[}{]}
\newcommand{\multi}{\multiBracket*}
\NewDocumentCommand{\f}{o}{\IfNoValueTF {#1} {\operatorname{f}} {\operatorname{f}\round{#1}} }
\NewDocumentCommand{\dist}{o}{\IfNoValueTF{#1} {\operatorname{d}} {\operatorname{d}\round{#1}} }
\NewDocumentCommand{\closure}{m}{\overline{#1}}
 \newcommand{\ball}{\mathcal B}
  \newcommand{\ballbrack}{\round}
 \newcommand{\multito}{\rightrightarrows}
\newcommand{\opn}[1]{\operatorname{#1}}
\DeclarePairedDelimiter{\setbrack}{\{} {\}}
\newcommand{\set}{\setbrack*}
\NewDocumentCommand{\B}{o m m}{\IfNoValueTF {#1} {\ball_{#2} \ballbrack {#3}} {\ball^{#1}_{#2} \ballbrack {#3}}}
\NewDocumentCommand{\F}{o}{\IfNoValueTF {#1} {\operatorname{F}} {\operatorname{F}\multi{#1}} }
\NewDocumentCommand{\localB}{m}{B\round{#1}}
\NewDocumentCommand{\wLsB}{m}{\mathcal W \round {#1} }
\NewDocumentCommand{\reach}{o}{\IfNoValueTF {#1} {\operatorname{R}} {\operatorname{R} [#1] }}
\NewDocumentCommand{\chainreach}{o}{\IfNoValueTF {#1} {\operatorname{CR}}{\operatorname{CR} [#1]}}
\newglossaryentry{ifs}
{
    name=IFS,
    description={A subset of all functions that map a set $X$ to it's self.}
}
\newglossaryentry{attractor}
{
    name=attractor,
    description={The attractive set of an \gls{ifs}.}
}
\newglossaryentry{strict attractor}
{
    name=strict attractor,
    description={The attractive set of an \gls{ifs}.}
}
\newglossaryentry{point wise attractor}
{
    name=point wise attractor,
    description={The attractive set of an \gls{ifs}.}
}
\newglossaryentry{semi attractor}
{
    name=semi-attractor,
    description={The attractive set of an \gls{ifs}.}
}
\newglossaryentry{quasi attractor}
{
    name= quasi attractor,
    description={The attractive set of an \gls{ifs}.}
}
\newglossaryentry{Conley attractor}
{
    name= Conley attractor,
    description={The attractive set of an \gls{ifs}.}
}
\newglossaryentry{small attractor}
{
    name= small attractor,
    description={The attractive set of an \gls{ifs}.}
}
\newglossaryentry{minimal attractor}
{
    name= minimal attractor,
    description={The attractive set of an \gls{ifs}.}
}
\newglossaryentry{sub-invariant}
{
    name=invariant,
    plural=invariance,
    description={The attractive set of an \gls{ifs}.}
}
\newglossaryentry{sup-invariant}
{
    name=super-invariant,
    plural=super-invariance,
    description={The attractive set of an \gls{ifs}.}
}
\newglossaryentry{invariant}
{
    name=invariant,
    description={The attractive set of an \gls{ifs}.}
}
\newglossaryentry{equicontinuous}
{
    name=equicontinuous,
    plural=equicontinuity,
    description={The attractive set of an \gls{ifs}.}
}
\newglossaryentry{evenly continuous}
{
    name=evenly continuous,
    plural=even continuity,
    description={The attractive set of an \gls{ifs}.}
}
\newacronym{cec}{c.e.c}{compositionally \gls{equicontinuous}}
\newacronym{elsc}{e.l.s.c}{evenly \gls{lower semi continuous}}
\newacronym{eusc}{e.u.s.c}{evenly \gls{upper semi continuous}}
\newacronym{tlse}{t.l.s.e}{topologically lower semi equicontinuous}
\newacronym{tuse}{t.u.s.e}{topologically upper semi equicontinuous}
\newglossaryentry{normal}
{
    name=disjunctive,
    description={A semi-infinite string containing every finite string as a substring.}
}
\newglossaryentry{lower semi continuous}
{
    name= lower semicontinuous,
    plural= lower semicontinuity,
    description={The attractive set of an \gls{ifs}.}
}
\newacronym{lsc}{l.s.c.}{\gls{lower semi continuous}}
\newglossaryentry{upper semi continuous}
{
    name= upper semicontinuous,
    plural= upper semicontinuity,
    description={The attractive set of an \gls{ifs}.}
}
\newacronym{usc}{u.s.c.}{\gls{upper semi continuous}}
\newglossaryentry{outer semi continuous}
{
    name= outer semicontinuous,
    plural= outer semicontinuity,
    description={The attractive set of an \gls{ifs}.}
}
\newacronym{osc}{o.s.c.}{\gls{outer semi continuous}}
\newglossaryentry{multifunction}
{
    name=multifunction,
    description={A function from $X$ to $2^Y$.}
}
\newglossaryentry{HBO}
{
    name=Hutchinson-Barnsley operator,
    description={A function from $X$ to $2^Y$.}
}
\newglossaryentry{complete domain}
{
    name=correspondence,
    description={A function from $X$ to $2^Y$.}
}
\title{A Necessary Condition on Chain Reachable Robustness of Dynamical Systems}
\author{Maxwell Fitzsimmons, Jun Liu\footnote{This work was supported in part by the NSERC DG, CRC, and ERA programs.}}
\begin{document}

\maketitle

%\address[First]{University of Waterloo,
%   Waterloo, ON N2L 3G1 Canada (e-mail: mfitzsim@uwaterloo.ca).}
%\address[Second]{University of Waterloo,
%   Waterloo, ON N2L 3G1 Canada (e-mail: j.liu@uwaterloo.ca).}

\begin{abstract}                % Abstract of not more than 250 words.
It is ``folklore'' that the solution to a set reachability problem for a dynamical system is only noncomputable  because of non-robustness reasons. A robustness condition that can be imposed on a dynamical system is the requirement of the chain reachable set to equal the closure of the reachable set. We claim that this condition necessarily imposes strong conditions on the dynamical system.
%We claim that this condition is unexpectedly strong and that many reasonable dynamical systems will not satisfy this condition.
For instance, if the space is connected and compact and we are computing a chain reachable robust single valued function $\f$ then $\f$ cannot have an unstable fixed point or unstable periodic cycle.
\end{abstract}

%\begin{keyword}
%Reachability analysis, Robustness, Computability, Discrete-time dynamics, Stability analysis, Minimal sets
%\end{keyword}

%===============================================================================
%% There are a number of predefined theorem-like environments in
%% ifacconf.cls:
%%
%% \begin{thm} ... \end{thm}            % Theorem
%% \begin{lem} ... \end{lem}            % Lemma
%% \begin{claim} ... \end{claim}        % Claim
%% \begin{conj} ... \end{conj}          % Conjecture
%% \begin{cor} ... \end{cor}            % Corollary
%% \begin{fact} ... \end{fact}          % Fact
%% \begin{hypo} ... \end{hypo}          % Hypothesis
%% \begin{prop} ... \end{prop}          % Proposition
%% \begin{crit} ... \end{crit}          % Criterion
\section{Introduction}
Many problems in control theory can be solved immediately if one has access to the reachable set of a dynamical system. Unfortunately, it is often difficult to exactly compute the reachable set. However, there are many algorithms that give approximations to the reachable set, for example see \cite{gan2017reachability,rungger2018accurate,fan2016automatic,gao2016interpolants,lal2019counterexample}. %Example of minimal sets. more citations, remark about sufficient conditions

As exact computation of the reachable set is difficult, researchers have investigated this problem through the lens of computability theory, see \cite{collins2005continuity,collins2007optimal, bournez2010robust,chen2015continuous,fijalkow2019decidability,kong2015dreach,akshay2016decidable}. In fact the reachable set of a general dynamical system (both in discrete time and continuous time) is noncomputable. This means that we need to find conditions on a dynamical system in order for the reachable set to be computable.

It is generally believed that the reachable set is not computable due to the dynamical system being ``non-physical'' or``artificial''; the dynamical system is some mathematical oddity that would never arise in a practical situation. Informally, we may say the dynamical system is somewhat robust if its reachable set is computable. In this work we will examine the implications of a discrete-time dynamical system being chain reachable robust (intuitively, the dynamics are insensitive to infinitesimal perturbations) first examined in \cite{collins2005continuity}; where the authors show that if a dynamical system is chain reachable robust then the reachable set is computable.
In fact the authors showed, in their framework of computability, that chain reachable robustness was also a necessary condition on computability of the reachable set. This robustness condition was also used in \cite{bournez2010robust} to prove certain continuous-time dynamical systems have computable reachable sets.

Although the computability result in \cite{collins2005continuity} is sharp, this work (and other work in the literature to the best of our knowledge) provided no practically verifiable sufficient conditions (or any for that matter) for a dynamical system to be chain reachable robust. Our original intention for this paper was to provide at least one non-trivial practically verifiable sufficient condition for a dynamical system to be chain reachable robust. We have failed in this regard. Instead, we provide a necessary condition on chain reachable robustness and assert that this necessary condition is likely too strong of a condition for practical purposes. More specifically, we claim that chain reachable robustness imposes strong conditions on the long-term behavior of the dynamics. Our main result, Theorem \ref{thm:unique or infinite}, states that the long-term behavior of a chain reachable robust system (in a connected compact metric space) is always stable and that the number of ``long-term behaviors\footnote{By this we are referring to minimal sets, see Subsection \ref{subsec:minimal sets}.}'' is either one or infinity. In the case where $\f: X \to X$ is a continuous function, $X$ is connected compact set, the dynamics are $x_n = \f[x_{n-1}]$, and the system is chain reachable robust, then all of the fixed points and periodic cycles of $\f$ are stable. In the case where there is a unique fixed point (or periodic cycle), it is globally asymptotically stable.

In Section \ref{sec:prelim} we briefly introduce necessary background information concerning chain reachable robustness, multifunctions and minimal sets. In Section \ref{sec:ness} we develop several technical results about the reachable set (largely under the assumption the system is chain reachable robust) to prove Theorem \ref{thm:unique or infinite}.

%In particular reachability problems such as: given points $x_0,y$ determine if a dynamical system takes on the value $y$ at some point in time with initial condition $x_0$.

\section{Preliminaries}\label{sec:prelim}
For simplicity we will work in metric spaces, rather than topological spaces like in  \cite{collins2005continuity,collins2007optimal}. We will consider discrete time dynamical systems  with control and without. Let $\round{X,\dist}$ be a metric space, $U$ be a set, and $\f: X \times U \to X$ be a function. Let the dynamics be
\begin{equation}\label{eqn:control dynamics}
x_{n+1} = \f[x_n,u_n]
\end{equation}
for some $\set{u_n}_{n \in \N} \subseteq U$. Another way to write the above is to define a multifunction $\F: X \multito X$ by $\F[x] = \f[x,U]$ and the dynamics are $x_{n+1} \in \F[x_n]$.
If we wish to not use control, then will simply write $x_{n+1} = \f[x_n]$.

%We define the reachable set $\reach[\F,C]$ to be the set of all points reachable by dynamics of the form $x_{n} \in \F[x_{n-1}]$ $n \in \N$ where $x_0 \in C$; $y \in \reach[F,C]$ iff $y =x_N$ for some $N \in \N\cup \set{0}$ and $x_{n} \in \F[x_{n-1}]$ $n \in \N$ where $x_0 \in C$.

\begin{definition}\label{def:reach}
Let $\round{X,\dist}$ be a metric space, $C \subseteq X$, and $\F: X \multito X$ be a multifunction. Define the reachable set
\begin{align*}
\reach[\F,C]=\{& x \in X: \exists \set{x_n}_{n=0}^N, N\geq 0, \text{ s.t. }
 x_{i} \in \F[x_{i-1}], 1 \leq i \leq N ,x_0 \in C,&  \text{and } x=x_N  \}.
%\reach[\F,C]=\set{x \in X: \exists \set{x_n \in \F[x_{n-1}]}_{n\in \N}~x_0 \in C ~ \exists n \in \N\cup \set{0}~ x=x_n }.
\end{align*}
If the multifunction is understood, we may instead write $\reach[C]$ to be the reachable set.
\end{definition}
We can see that $\reach[\F,C] = \union_{n=0}^\infty \F^{\circ n} \multi C,$ where $\F[C] = \union_{c \in C} \F[c]$, $\F^{\circ 0}[x] = \set{x}$, and $\F^{\circ n}\multi x = \F[\F^{\circ (n-1)}\multi x ]$.

For $\epsilon >0$ and a set $A \subseteq X$, we use the notation  $A_\epsilon = \B \epsilon A =\union_{a \in A} \B \epsilon a$, where $\B \epsilon x = \set{y\in X : \dist[x,y]<\epsilon}$.
\begin{definition}\label{def:chain reach}
Let $\round{X,\dist}$ be a metric space, $C \subseteq X$, and $\F: X \multito X$ be a multifunction.
Let $\epsilon >0$, we define an $\epsilon$-chain of $\multi{\F,C}$ to be $\set{y_n}_{n=0}^N$, $N \geq 0$, with $y_{i} \in \F_\epsilon\multi {y_{i-1}} :=  \B \epsilon {\F[y_{i-1}]}$, $1 \leq i \leq N$, and $y_0 \in C$.

Define the chain reachable set
\begin{multline*}
\chainreach[\F,C] = \{x \in X:  \forall \epsilon >0, \exists \set{y_n}_{n=0}^N, \text{ an } \epsilon\text{-chain of } \multi{\F,C}, \text{ s.t } x=y_N\}.
\end{multline*}
If the multifunction is understood, we may instead write $\chainreach[C]$ to be the chain reachable set.
The reachable set $\reach[\F,C]$ is said to be chain reachable robust or simply robust if $\closure{\reach[\F,C]} =\chainreach[\F,C] $.
\end{definition}
The chain reachable set is closed assuming that $\f$ is continuous in both its variables and $U$ is a compact set. In view of (\ref{eqn:control dynamics}), an $\epsilon$-chain of $\multi{F,C}$ is also of the form: $\set{y_n}_{n=0}^N$, $N \geq 0$, and
\[
\dist[y_{i},\f[y_{i-1},u_{i-1}]] < \epsilon,
\]
for $1 \leq i \leq N$, where $\set{u_n}_{n=0}^{N-1} \subseteq U$, and $y_0 \in C$.
Additionally, if we define $\F_\epsilon \multi {x} = \B \epsilon {\F[x]}$, then $\chainreach[\F,C]  = \bigcap_{\epsilon >0} \union_{n=0}^\infty \F^{\circ n}_\epsilon \multi C$.
In \cite{collins2007optimal} the authors showed that the chain reachable set is an optimal over-approximation of the reachable set.

The idea of using $\epsilon$-chains or perturbed dynamics to study the true dynamics is widely used in verification and control of dynamical systems, for example see \cite{kong2015dreach, liu2017robust, li2018rocs, li2018robustly}.

%%%%%%%%%%%%%%%%%%%%%%%%%%%%%%%%%%%%%%
%%%%%%%%%%%%%%%%%%%%%%%%%%%%%%%%%%%%%%
%%%%%%%%%%%%%%%%%%%%%%%%%%%%%%%%%%%%%%%%
\subsection{Multifunctions}\label{subsec:multi}
A multifunction from $X$ to $Y$ is a function from $X$ to $2^Y\setminus \emptyset$. If $\F$ is a multifunction from $X$ to $Y$, we write $\F: X \multito Y$ and, for all $S \subseteq X$, we define $\F[S]= \union_{s \in S} \F[s]$.

\begin{definition}\label{def:preimage}
Let $X,Y$ be sets and $\F : X \multito Y$. Define, for all $B\subseteq Y$,
the upper pre-image of $\F$ as
\[
\F^+ \multi{B} = \set{x\in X : \F[x] \subseteq B}.
\]
and the lower pre-image of $\F$ as
\[
\F^- \multi{B} = \set{x\in X : \F[x] \cap B\neq \emptyset}.
\]
%\begin{align*}
%\F^+ \multi{B} &= \set{x\in X : \F[x] \subseteq B}&(\text{upper pre-image of }\F) \\
%\F^- \multi{B} &= \set{x\in X : \F[x] \cap B\neq \emptyset} &(\text{lower pre-image of }\F). \\
%\end{align*}
\end{definition}
Often, the lower pre-image is called the inverse {multifunction} of $\F$; note that $\F^-$ is a  {multifunction} in its own right, while $\F^+$ is not.
%Some elementary properties and relations of the upper and lower pre-images are given in Proposition~\ref{prop:preimages}. We now discus continuity of {multifunctions}.

\begin{definition}\label{def:semi continuity}
Let $\round{X,\tau},\round{Y,\rho}$ be topological spaces, and $\F : X \multito Y$. We say that $\F$ is \acrfull{lsc} if, for all $V$ open in $Y$, $\F^-\multi V$ is open in $X$. We say that $\F$ is \acrfull{usc} if,  for all $V$ open in $Y$  $\F^+\multi V$ is open in $X$.
%\begin{align*}
%\F \text{ is \acrfull{lsc} if } V \text{ is open in } Y \text { implies } \F^-\multi V \text { is open in } X \\
%\F \text{ is \acrfull{usc} if } V  \text{ is open in } Y \text { implies } \F^+ \multi V \text { is open in } X \\
%%\F \text{ is weakly \acrshort{usc} if } C  \text{ is compact in } Y \text { then } \F^- \multi C \text { is closed in } X \\
%\end{align*}
%Define $\graph \F =\set{(x,y) \in X \times Y : y \in \F[x]}$ we say that $\F$ is \acrfull{osc} if $\graph \F$ is closed in $X \times Y$.

 If $\F$ is both lower and upper semicontinuous, then we call $\F$ continuous. %If $\F$ is both lower and outer semicontinuous then we call $\F$ sequentially continuous.
\end{definition}

We would like to note that, in \cite{collins2005continuity,collins2007optimal}, they assume that the multifunctions being computed are closed-valued continuous multifunctions.

\begin{prop}\label{prop:multi facts}
Let $\round{X,\dist},\round{Y,\rho}$ be metric spaces and $\F : X \multito Y$. Then
\begin{enumerate}
%\item if $\F$ is \acrshort{lsc} then for every open set $V$ with $\F^- \multi V \ni x$ we have that
%\item If $\F$ has closed values and is \acrshort{usc} then $\F$ is \acrshort{osc}
%\item If $\F$ is \acrshort{osc} with for every $x \in X$ there is a $\delta>0$ such that $\closure{\F[\B[X] \delta x]}$ is compact (this is called a locally compact multifunction) then $\F$ \acrshort{usc}.
%\item Provided $\F$ has closed values and $Y$ is compact: $\F$ is \acrshort{osc} if and only if $\F$ is \acrshort{usc}.
%\item  $\F$ is \acrshort{osc} if and only if for all $x \in X$ we have $\bigcap_{\delta>0} \F[\B[X] \delta x] = \F[x]$
\item $\F$ is \acrshort{lsc} if and only if, for all $S \subseteq X$, we have $\F[\closure{S}] \subseteq \closure{\F[S]}$. \label{item:lsc}
\item Assume that $\F$ is compact-valued. Then $\F$ is \acrshort{usc} if and only if, for every compact set $C \subseteq X$ and every $\epsilon >0$, there is a $\delta >0$ such that $ \F[C_\delta] \subseteq \F_\epsilon \multi C$.
\item Assume that $\F$ is compact-valued. Then  $\F$ is \acrshort{usc} if and only if, for every point $x \in X$ and every $\epsilon >0$, there is a $\delta >0$ such that $ \F[\B \delta x] \subseteq \F_\epsilon \multi x$. \label{item:point usc}

\item{ $\F$ is \acrshort{usc} if and only if, for every closed set $C \subseteq Y$ we have that $\F^{-} \multi C$ is closed.} \label{item:usc closed}

\item{If $\F$ is \acrshort{lsc}, then $\reach[F,x]$ and $\closure{\reach[F,x]}$ are \acrshort{lsc} multifunctions of $x$.} \label{item:reach lsc}
\end{enumerate}

\end{prop}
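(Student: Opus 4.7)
The plan is to unwind the definitions of \acrshort{lsc} and \acrshort{usc} item by item, handling items 1 and 4 via pre-image arithmetic, items 2 and 3 via compact-valued upper semicontinuity, and item 5 by building up the reach operator. Only one step requires a genuine metric-space ingredient.

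For item 4, the identity $\F^+\multi{Y \setminus C} = X \setminus \F^-\multi{C}$ immediately converts the definition of \acrshort{usc} into a closed-preimage statement. For item 1, I argue from the open-preimage definition: forward, given $x \in \closure{S}$ and $y \in \F[x]$, any open $V \ni y$ has $\F^-\multi{V}$ open and containing $x$, so it meets $S$, yielding $y \in \closure{\F[S]}$; conversely, setting $S := X \setminus \F^-\multi{V}$ and applying the hypothesis forces $\F[\closure{S}] \subseteq Y \setminus V$ (because $V$ is open), so $\closure{S} \subseteq S$ and $\F^-\multi{V}$ is open.

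For item 3, the forward direction uses that $\F[x]$ is compact and contained in the open set $\F_\epsilon\multi{x}$, so $x \in \F^+\multi{\F_\epsilon\multi{x}}$, which is open by \acrshort{usc}; any open ball at $x$ inside this preimage gives $\delta$. The converse: for open $V$ with $\F[x] \subseteq V$, compactness of $\F[x]$ yields an $\epsilon > 0$ with $\F_\epsilon\multi{x} \subseteq V$ (finitely many balls inside $V$ cover $\F[x]$), and the pointwise hypothesis produces $\delta$ with $\F[\B \delta x] \subseteq \F_\epsilon\multi{x} \subseteq V$. Item 2 runs the same argument with the compact set $C$ in place of $\set{x}$, invoking the metric-space fact that any open set containing a compact set contains a $\delta$-thickening of it. This thickening step is the one nonmechanical ingredient in the proposition and is the main obstacle.

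For item 5, the plan is to observe that \acrshort{lsc} is preserved by composition and by arbitrary unions, via the identities $(G \circ H)^-\multi{V} = H^-\multi{G^-\multi{V}}$ and $\round{\union_n G_n}^-\multi{V} = \union_n G_n^-\multi{V}$. Induction then yields that $\F^{\circ n}$ is \acrshort{lsc}, hence so is $\reach[\F,\cdot] = \union_n \F^{\circ n}\multi{\cdot}$. Finally, since $V$ is open, $V \cap \closure{G[x]} \neq \emptyset$ iff $V \cap G[x] \neq \emptyset$, so pointwise closure preserves \acrshort{lsc}, giving the result for $\closure{\reach[\F,\cdot]}$.
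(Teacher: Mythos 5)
Your proofs are correct and complete; the difference from the paper is that the paper does not actually prove items (1)--(4) at all---it cites Chapter~1 of the Hu--Papageorgiou handbook---and disposes of item~(5) with the one-line remark that one should verify the characterization of item~(1) for $\reach$ and $\closure{\reach}$. What you supply is essentially the content of that citation: the complementation identity $\F^+\multi{Y\setminus C}=X\setminus\F^-\multi{C}$ for item~(4), the two-sided argument via $\F^-\multi{V}$ for item~(1), and for items~(2)--(3) the observation that $\F_\epsilon\multi{x}$ (resp.\ $\F_\epsilon\multi{C}$) is an open set containing $\F[x]$ (resp.\ $\F[C]$) whose upper pre-image is open, combined with the converse covering argument that a compact set inside an open set admits an $\epsilon$-collar; as you note, the thickening of a compact set is the only genuinely metric step. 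For item~(5) your route is organized differently from the paper's sketch: you work with the open-preimage definition and the identities $\round{\opn{G}\circ\opn{H}}^-\multi{V}=\opn{H}^-\multi{\opn{G}^-\multi{V}}$ and $\round{\union_n \opn{G}_n}^-\multi{V}=\union_n \opn{G}_n^-\multi{V}$, plus the fact that an open set meets $\closure{\opn{G}\multi{x}}$ iff it meets $\opn{G}\multi{x}$, whereas the paper intends the equivalent verification that $\reach[\F,\closure{S}]\subseteq\closure{\reach[\F,S]}$. Both are standard and interchangeable; yours has the advantage of being self-contained and of isolating reusable closure properties (composition, union, pointwise closure) of \acrshort{lsc} multifunctions, at the cost of being longer than the citation the authors chose.
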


\begin{proof}
The proof of items (1) through (4) can be found in Chapter 1 of~\cite{hu1997handbook}. The proof of item (5) follows from verifying item (1) holds for the multifunctions in question. ~
\end{proof}

If we have two multifunctions $\F: X \multito Y$ and $\opn{G}: Y \multito Z$, define the composition multifunction $\opn{G}  \circ \F: X \multito Z$ by $\opn{G}  \circ \F \multi x = \opn{G} \multi{\F[x]}$. The composition of \acrshort{lsc} (\acrshort{usc}) multifunctions is again \acrshort{lsc} (\acrshort{usc}). Suppose that $P$ is a property sets can have (i.e. closed, open, convex, finite etc.). We say $\F$ is $P$-valued if, for all $x \in X$, $\F[x]$ has the property $P$. Instead of saying $\F$ is singleton-valued we will say $\F$ is single/point-valued. We define $\closure{\F}= \opn{clF}: X \multito Y$ to be $\closure{\F} \multi x= \opn{clF}\multi x = \closure{\F[x]}$ for all $x \in X$.

 Note both the chain reachable sets and reachable sets are multifunctions for a fixed $\F:X \multito X$. In this case $\reach,\chainreach:X \multito X$, $\reach[x] =\union_{n=0}^\infty \F^{\circ n} \multi x$, and $\chainreach[x] = \bigcap_{\epsilon >0} \reach[\F_\epsilon, x]$.

\subsection{Minimal Sets}\label{subsec:minimal sets}
Suppose that $X$ is a metric space and $\F: X \multito X$ is a multifunction. Then a set $ A \subseteq X$ is said to be a minimal set of $\F$, or simply a minimal set, if it is a minimal closed, nonempty, \gls{sub-invariant} set of $\F$. That is, $A$  is closed, nonempty and satisfies $\F[A] \subseteq A$. Further, for all $B \subseteq A$ that is closed, nonempty and satisfies $\F[B] \subseteq B$, we must have that $B=A$. In a compact space with $\F= \set{\f}$ being single-valued, a minimal set is where all the long-term behavior of the sequence $\set{\f^{\circ n}}_{n \in \N}$ ``happens''. In this section we state a number of results about minimal sets of a \acrshort{lsc} multifunction.

\begin{prop}\label{prop:minimal sets}
Let $\round{X,\dist}$ be a metric space, $A \subseteq X$ be a set, and  $\F: X \multito X$ be a \acrshort{lsc} multifunction. Then the following are equivalent:
\begin{enumerate}
\item $A$ is a minimal set of $\F$.
\item $A \neq \emptyset$ and for all $a \in A$ we have $\closure{\reach[\F,a]} = A$.
%\item $A \subseteq \set{x \in X: A = \bigcap_{N \in \N} \closure{\union_{n \geq N} \F^{\circ n} \multi x }}$.
\end{enumerate}
Furthermore, if $\closure{\reach[\F,x]}$ is compact for some $x \in X$, then there is a compact minimal set $A \subseteq \closure{\reach[\F,x]}$.

%Additionally, if $\F=\set{\f}$ is single valued and $\closure{\reach[\f,x]}$ is compact with $A \subseteq \closure{\reach[\f,x]}$ being a minimal set then $A = \set{y \in X : \exsits \set{\f^{\circ n_k} \round x}_{k \in \N}\to y}$
\end{prop}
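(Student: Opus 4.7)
The plan is to handle the equivalence directly in both directions, using the characterization of lower semicontinuity from Proposition \ref{prop:multi facts}(1) (namely $\F[\closure{S}] \subseteq \closure{\F[S]}$) to move freely between orbits and their closures, and then to obtain a compact minimal set via Zorn's lemma applied to the family of closed nonempty sub-invariant subsets of a compact orbit closure.

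For (1)\,$\Rightarrow$\,(2), I would fix $a \in A$ and set $B = \closure{\reach[\F,a]}$. Since $A$ is sub-invariant and $a \in A$, induction gives $\F^{\circ n}[a] \subseteq A$ for all $n\geq 0$, so $\reach[\F,a] \subseteq A$; as $A$ is closed, $B \subseteq A$. The set $B$ is clearly closed and nonempty, and it is sub-invariant: by lsc, $\F[B] \subseteq \closure{\F[\reach[\F,a]]}$, and since $\F[\reach[\F,a]] = \bigcup_{n\geq 1} \F^{\circ n}[a] \subseteq \reach[\F,a]$, we get $\F[B] \subseteq B$. Minimality of $A$ then forces $B = A$. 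For (2)\,$\Rightarrow$\,(1), I would first observe that $A$ is closed (it equals $\closure{\reach[\F,a]}$ for any $a \in A$), nonempty by hypothesis, and sub-invariant by the same lsc computation as above. Given any closed nonempty $B \subseteq A$ with $\F[B] \subseteq B$, pick $b \in B$: then $\reach[\F,b] \subseteq B$ by induction, so $\closure{\reach[\F,b]} \subseteq B$, but hypothesis (2) applied to $b \in A$ gives $\closure{\reach[\F,b]} = A$, so $A \subseteq B$ and hence $B = A$.

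For the ``furthermore'' part, I would let $K = \closure{\reach[\F,x]}$, which is compact by assumption and satisfies $\F[K] \subseteq K$ by the lsc argument above. Let $\mathcal{C}$ be the family of all closed nonempty subsets $B \subseteq K$ with $\F[B] \subseteq B$, partially ordered by reverse inclusion. This family is nonempty (it contains $K$), and any chain $\set{B_\alpha}$ in $\mathcal{C}$ has an upper bound, namely $B = \bigcap_\alpha B_\alpha$: it is closed, still sub-invariant since $\F[B] \subseteq \F[B_\alpha] \subseteq B_\alpha$ for every $\alpha$, and nonempty because the chain has the finite intersection property inside the compact set $K$. Zorn's lemma then yields a minimal element $A \in \mathcal{C}$, which is a minimal set of $\F$ (any closed nonempty sub-invariant subset of $A$ lies in $\mathcal{C}$ and is above $A$ in the reverse order, hence equals $A$), and $A \subseteq K$ is compact.

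The only non-routine ingredient is the lsc push-closure property $\F[\closure{S}] \subseteq \closure{\F[S]}$, which is what lets ``closure of an orbit'' inherit sub-invariance; once that is in hand, both the equivalence and the Zorn argument are mechanical. The mild subtlety is keeping track of the order direction in Zorn (reverse inclusion, so that minimal sets become maximal elements) and checking nonemptiness of chain intersections from compactness rather than from any structural property of $\F$.
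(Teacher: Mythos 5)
Your proposal is correct and follows essentially the same route as the paper: both directions of the equivalence rest on the observation (via Proposition \ref{prop:multi facts}(1)) that $\closure{\reach[\F,a]}$ is a nonempty closed sub-invariant set, and the ``furthermore'' is the same Zorn's-lemma argument on closed nonempty sub-invariant subsets of the compact orbit closure, with your reverse-inclusion ordering being exactly the paper's ``dual of Zorn's Lemma''. You have simply supplied the routine details the paper leaves to the reader, and they all check out.
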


\begin{proof}
The equivalence of items (1) and (2) follows from the observation that $\closure{\reach[\F,x]}$ is a nonempty closed invariant set of $\F$. This fact follows from item (\ref{item:lsc}) of Proposition \ref{prop:multi facts}.

To prove the ``furthermore'', one can apply (the dual of) Zorn's Lemma to the set
\[
\set{B \subseteq \closure{\reach[x]} : \emptyset \neq B \text{ is closed and \gls{sub-invariant}}}
\]
equipped with the partial order $\subseteq$. ~
\end{proof}

In the case that $\F=\set{\f}$ is single-valued, minimal sets are typically fixed points of $\f$ (even in the multi-valued case, we have $\closure{\F[A]} =A$ if $A$ is minimal) or limit cycles of $\f$, one of which must be the case if the minimal set is finite.

\begin{example}\label{ex:minimal sets}
Let $X$ be the unit circle in the complex plane with the usual metric. Every point in $X$ can be uniquely represented in the form $e^{2\pi i x}$, where $x \in [0,1)$ and $i^2 =-1$. Define the map
\[
\f[e^{2\pi i x}] = e^{2\pi i \round {x + \theta}}
\]
for $x,\theta \in [0,1)$ and $\f:X \to X$. If $\theta= \frac{p}{q}$ for $p,q \in \mathbb Z, q \neq 0$ and $p,q$ are relatively prime, then the minimal sets of $\f$ are all of the form $\set{z,\f[z],\dots, \f^{\circ q}\round z}$, where $z$ could be any point in $X$. In fact, every point in $X$ belongs to a minimal set.
If $\theta$ is irrational, then the unique minimal set of $\f$ is $X$ (this follows from the relatively well-known fact that the sequence $\set{(x +n\theta)\opn{mod} 1}_{n \in \N}$ is dense on $[0,1]$ when $\theta$ is irrational). This is an example of a minimal set that is not a fixed point or periodic cycle.
\end{example}

\begin{definition}\label{def:Lyapunov stable}
Let $\round{X,\tau}$ be a topological space, $A \subseteq X$ be a set, and  $\F: X \multito X$ be a multifunction. Then $A$ is said to be Lyapunov stable if, for every open set $V \supseteq A$, there is a open set $W \supseteq A$ with $\reach[W] \subseteq V$.
\end{definition}
A Lyapunov stable minimal set is the place where the long-term behavior of the dynamics from Equation~(\ref{eqn:control dynamics}) happens, assuming that the dynamics reach the minimal set in the long-term.

\begin{prop}\label{prop:Lyapunov stable orbit}
Let $\round{X,\dist}$ be a metric space space, $U$ be a set, and $\f : X \times U \to X$ be a function such that,  for all $u \in U$, we have that $\f\round{\cdot,u}=\f_u \round{\cdot}$ is continuous.
Furthermore, let $A$ be a Lyapunov stable compact set and $\set{x_n}_{n \in \N}$ be a sequence defined by Equation~(\ref{eqn:control dynamics}) with $\closure{\set{x_n}_{n \in \N}}$ compact. Then we have
\[
\closure{\set{x_n}_{n \in \N}} \cap A \neq \emptyset \implies \bigcap_{N \in \N} \closure{\set{x_n}_{n =N}^\infty} \subseteq A
\]
and, in the case where $U$ is singleton (no control) and $A$ is a minimal set of $\F[x] = \set{\f\round x}$ where $\f$ is continuous, we have $\closure{\set{x_n}_{n \in \N}} \cap A \neq \emptyset \implies \bigcap_{N \in \N} \closure{\set{x_n}_{n =N}^\infty} = A $.

\end{prop}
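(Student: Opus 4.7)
The plan is to identify the set
\[
L := \bigcap_{N \in \N} \closure{\set{x_n}_{n=N}^\infty}
\]
as the set of cluster points (subsequential limits) of the orbit, and then use Lyapunov stability of $A$ to confine sufficiently late tails of the trajectory inside arbitrarily small neighborhoods of $A$. Note that $L$ is nonempty by compactness of $\closure{\set{x_n}_{n \in \N}}$ and the finite intersection property applied to the nested family $\set{\closure{\set{x_n}_{n=N}^\infty}}_{N \in \N}$, and $L$ is closed as an intersection of closed sets.

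For the first implication I would argue by contradiction. Suppose there exists $y \in L \setminus A$. Since $A$ is compact, $\dist[y,A] > 0$, so set $\delta = \dist[y,A]/3$ and let $V = A_\delta$, an open set containing $A$. Applying Lyapunov stability to the multifunction $\F[x] = \f[x,U]$ yields an open $W \supseteq A$ with $\reach[\F, W] \subseteq V$. The hypothesis $\closure{\set{x_n}_{n \in \N}} \cap A \neq \emptyset$ produces a subsequence $x_{n_k} \to a$ for some $a \in A$; since $a \in A \subseteq W$ and $W$ is open, we may fix an index $N_0$ with $x_{N_0} \in W$. Because $\set{x_n}_{n \geq N_0}$ is an orbit of $\F$ starting from $x_{N_0} \in W$, we have $x_n \in \reach[\F,W] \subseteq A_\delta$ for every $n \geq N_0$. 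But $y \in L$ is a cluster point, so there is a subsequence $x_{m_j} \to y$ with $m_j \geq N_0$; from $x_{m_j} \in A_\delta$ we obtain $\dist[y,A] \leq \delta = \dist[y,A]/3$, contradicting $\dist[y,A] > 0$.

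For the reverse containment in the no-control case, I would verify that $L$ is a nonempty closed invariant subset of $A$ and then invoke minimality. We already have $L$ nonempty, closed, and $L \subseteq A$ by the first part, so only $\f[L] \subseteq L$ remains. If $y \in L$ is the limit of $x_{m_j}$, then $x_{m_j+1} = \f[x_{m_j}] \to \f[y]$ by continuity of $\f$, showing $\f[y]$ is also a cluster point of the orbit and hence lies in $L$. Minimality of $A$ then forces $L = A$.

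The main obstacle is organizing the first part so that Lyapunov stability is applied to a neighborhood of $A$ genuinely separated from $y$, which is why I choose $\delta = \dist[y,A]/3$ and invoke the full multifunction reach $\reach[\F,W]$ rather than following a single control-dependent orbit; applying Lyapunov stability along the specific trajectory would require stability to be formulated trajectory-wise, whereas the hypothesis is about \emph{all} sequences emanating from $W$. Everything after that reduces to standard facts about cluster point sets of relatively compact sequences together with the definition of minimality.
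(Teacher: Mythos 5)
Your proof is correct and follows essentially the same route as the paper's: Lyapunov stability is used to trap the tail of the orbit in a neighborhood $V$ of $A$ once some $x_{N_0}$ lands in the witnessing set $W$, which forces every cluster point into $A$, and the equality in the no-control case follows by observing that the cluster set is a nonempty closed invariant subset of the minimal set $A$. The only cosmetic difference is that the paper first proves the sequence is eventually in \emph{every} neighborhood of $A$ and then separates a putative outside cluster point by disjoint open sets, whereas you specialize immediately to $V = A_{\delta}$ with $\delta = \dist[y,A]/3$; the substance is identical.
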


\begin{proof}
We claim that, for every open set of $V \supseteq A$, there is an $N \in \N$ for all $n\geq N$ such that $x_n \in V$, provided $\closure{\set{x_n}_{n \in \N}} \cap A\neq \emptyset$. To see this, pick $a \in \closure{\set{x_n}_{n \in \N}} \cap A$ and any open $V \supseteq A$. Then, by  Lyapunov stability of $A$ there is $W\supseteq A$ such that $\reach[W] \subseteq V$. As $a \in \closure{\set{x_n}_{n \in \N}}$ and $W$ is an open set of $a \in A$, we have  $W \cap \set{x_n}_{n \in \N} \neq \emptyset$. So there is $N \in \N$ with $x_N \in W$. But for every $n > N$ we have
\begin{align*}
x_n \in \f[x_{n-1},U] = \F[x_{n-1}] &\subseteq \F^{\circ \round{n-N}}\multi {x_N} \\ &\subseteq \reach[W] \subseteq V.
\end{align*}
This proves the claim.

Now, the set $\bigcap_{N \in \N} \closure{\set{x_n}_{n =N}^\infty}$ is the limit points of the convergent subsequences of $\set{x_n}_{n \in \N}$.  So suppose, for the sake of contradiction, that $y \in X\setminus A$ is a limit of a subsequce of $\set{x_n}_{n \in \N}$. Then, since $A$ is compact, there are open sets $V \supseteq A$ and $O \ni y$ with $V\cap O = \emptyset$, but by the claim the sequence is eventually in $V$, so it cannot eventually be in $O$. Hence, $y$ cannot be a limit point of the sequence, a contradiction, and so $\bigcap_{N \in \N} \closure{\set{x_n}_{n =N}^\infty} \subseteq A$.

In the case where $A$ is minimal and $\F=\set{\f}$ single-valued, the set $\bigcap_{N \in \N} \closure{\set{x_n}_{n =N}^\infty}  =\bigcap_{N \in \N} \closure{\set{\f^{\circ n} \round x}_{n =N}^\infty}$ is closed, nonempty (by compactness), and \gls{sub-invariant}. % To see that it is \gls{sub-invariant} recall that we are now assuming $\f$ is continuous and we can see
%\begin{align*}
%\f[\bigcap_{N \in \N} \closure{\set{\f^{\circ n} \round x}_{n =N}^\infty}]
%&\subseteq \bigcap_{N \in \N} \f[\closure{\set{\f^{\circ n} \round x}_{n =N}^\infty}]
%&&\subseteq \bigcap_{N \in \N} \closure{\f[\set{\f^{\circ n} \round x}_{n =N}^\infty]} \\
%&&&= \bigcap_{N \in \N} \closure{\set{\f^{\circ n} \round x}_{n =N+1}^\infty}\\
%&&&\subseteq \bigcap_{N \in \N} \closure{\set{\f^{\circ n} \round x}_{n =N}^\infty}.
%\end{align*}
The set in question is contained in $A$ by the first part of this theorem and by minimality we must have $\bigcap_{N \in \N} \closure{\set{\f^{\circ n} \round x}_{n =N}^\infty}=A$. ~
\end{proof}

Effectively we know that if the dynamics ``touch'' a Lyapunov stable set  we know  the long-term behavior (the limit points of the dynamics) must also be in this Lyapunov stable set. At this point it is natural to ask when a set is Lyapunov stable.

\begin{prop}\label{prop:usc stable}
Let $\round{X,\dist}$ be a metric space, $\F: X \multito X$ be a multifunction, and $A \subseteq X$ be a compact \gls{sub-invariant} set. If $\reach$ or $\closure{\reach}$ is \acrshort{usc}, then $A$ is Lyapunov stable. In particular, every compact minimal set is Lyapunov stable.
\end{prop}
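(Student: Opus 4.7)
The plan is to work directly from the definition of u.s.c.\ via the upper preimage. Given a compact sub-invariant $A$ and an open set $V \supseteq A$, the candidate neighborhood will simply be $W := \reach^+\multi{V} = \set{x \in X : \reach[x] \subseteq V}$, which is open by the assumed u.s.c.\ of $\reach$. Both requirements of Lyapunov stability (that $W \supseteq A$ and that $\reach[W] \subseteq V$) then fall out of unwinding definitions.

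For $\reach[W] \subseteq V$, note that for any $w \in W$ we have $\reach[w] \subseteq V$ by construction, and $\reach[W] = \union_{w \in W} \reach[w]$, so the inclusion is automatic. For $A \subseteq W$, I would use sub-invariance: from $\F[A] \subseteq A$ a routine induction gives $\F^{\circ n}\multi A \subseteq A$ for every $n \geq 0$, hence $\reach[A] = \union_{n=0}^\infty \F^{\circ n}\multi A \subseteq A$. Since $A \subseteq V$, every $a \in A$ satisfies $\reach[a] \subseteq V$, i.e.\ $a \in W$.

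If instead only $\closure{\reach}$ is assumed u.s.c., I would take $W := \closure{\reach}^+\multi V$; this $W$ is open, and since $A$ is compact in a metric space and therefore closed, $\closure{\reach[a]} \subseteq \closure A = A \subseteq V$ for each $a \in A$, giving $A \subseteq W$. For $w \in W$ we still have $\reach[w] \subseteq \closure{\reach[w]} \subseteq V$, so $\reach[W] \subseteq V$ as before. Finally, the ``in particular'' clause is immediate: a minimal set is by definition closed and sub-invariant, so any compact minimal set satisfies the hypotheses of the first statement. There is no real obstacle here — the content is essentially the observation that the upper-preimage formulation of u.s.c.\ is precisely tailored to produce Lyapunov-stable neighborhoods whenever the target set already absorbs its own orbit.
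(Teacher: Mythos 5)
Your proof is correct and follows essentially the same route as the paper, which likewise takes $W = \reach^+\multi V$ (resp.\ $\opn{clR}^+\multi V$) as the required open sub-invariant neighborhood of $A$; you have simply filled in the routine verifications (openness via u.s.c., $A \subseteq W$ via sub-invariance and closedness of $A$, and $\reach[W]\subseteq V$ by definition of the upper preimage).
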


\begin{proof}
The result follows from noticing that $\reach^+\multi V$ and $\opn{clR}^+\multi V$ are \gls{sub-invariant} neighborhoods of $A$ whenever $V\supseteq A$ and $A$ is \gls{sub-invariant}. ~
\end{proof}

Later, we will use some results about the set
\begin{equation}\label{eqn:weak basin}
\wLsB A = \set{x \in X : A \subseteq \closure{\reach[\F,x]}}.
\end{equation}

\begin{thm}\label{thm:weak basin}
Let $\round{X,\dist}$ be a metric space, $\F: X \multito X$ be a \acrshort{lsc} \gls{multifunction}, and  $A$ be a minimal set of of $\F$.
Then the following holds for any local basis $\localB{a}$, $a \in A$ (a local basis of a point $x$ is a collection of sets with the following property: for any open $V \ni x$, there is a $U \in \localB x$ with $x \in \opn{int}\round{U} \subseteq U \subseteq V$):
\begin{enumerate}
\item{\(\wLsB A= \opn{clR}^{-} \multi A %=\set{x \in X \mid \closure{\reach \multi x} \cap A \neq \emptyset }
\). }
%\item{\[\wLsB A= \bigcap_{a \in A} \bigcap_{V \in \localB a} \reach^{-} \multi {V }\] }
\item {For any $a \in A$, $$\wLsB A= \bigcap_{V \in \localB a} \reach^{-} \multi {V }.$$}
%\item{The set $\mathcal B = \wLsB A$ is the largest set satisfying
%\[
% A =\bigcap_{ x \in \mathcal B} \closure{\F_N^* \multi x}
%\]}
\item{
\(
\opn{clR}^{-} \multi {\wLsB A} =\wLsB A.
\)
}
\item $\wLsB A$ is open if and only if $\wLsB A$ is a neighborhood of some $a \in A$.
\item If $\closure{\reach}=\opn{clR}$ is \acrshort{usc} then $\wLsB A$ is closed.
\end{enumerate}
\end{thm}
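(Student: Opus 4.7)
My plan is to prove the five items in order, since items (2)--(5) all lean on the characterization obtained in item (1). The essential input, beyond \acrshort{lsc} of $\F$, is Proposition \ref{prop:minimal sets}(2), which gives $\closure{\reach[\F,a]} = A$ for every $a \in A$. Combined with Proposition \ref{prop:multi facts}(1) applied to $S = \reach[\F,x]$, which makes $\closure{\reach[\F,x]}$ closed and \gls{sub-invariant}, this lets me freely pass between the pointwise statement ``some point of $A$ lies in $\closure{\reach[\F,x]}$'' and the global statement ``all of $A$ lies in $\closure{\reach[\F,x]}$''.

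For (1), the forward containment is immediate because $A$ is nonempty. For the reverse, if $y \in A \cap \closure{\reach[\F,x]}$, then since $\closure{\reach[\F,x]}$ is closed and \gls{sub-invariant} it contains $\closure{\reach[\F,y]}$, and minimality of $A$ forces $A = \closure{\reach[\F,y]} \subseteq \closure{\reach[\F,x]}$. Item (2) follows in the same spirit: the same argument shows $\opn{clR}^{-} \multi A = \opn{clR}^{-} \multi{\set{a}}$ for any single $a \in A$, and ``$a \in \closure{\reach[\F,x]}$'' is equivalent to every open neighborhood of $a$ meeting $\reach[\F,x]$, which via the local basis property reduces to $x \in \reach^{-} \multi V$ for every $V \in \localB{a}$. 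Item (3) uses the same tactic once more: one containment is immediate since $x \in \reach[\F,x]$, and for the reverse I pick $y \in \closure{\reach[\F,x]} \cap \wLsB{A}$ and use \gls{sub-invariant} to conclude $A \subseteq \closure{\reach[\F,y]} \subseteq \closure{\reach[\F,x]}$.

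For (4), the nontrivial direction begins with an open $W$ satisfying $a \in W \subseteq \wLsB{A}$ for some $a \in A$. Given any $x \in \wLsB{A}$, $a \in \closure{\reach[\F,x]}$ yields some $n \geq 0$ with $W \cap \F^{\circ n}[x] \neq \emptyset$; because compositions of \acrshort{lsc} multifunctions are \acrshort{lsc}, the set $(\F^{\circ n})^{-} \multi W$ is an open neighborhood of $x$. Any $z$ in this preimage has $\reach[\F,z] \cap W \neq \emptyset$, hence meets $\wLsB{A}$, so item (3) gives $z \in \wLsB{A}$, which shows $\wLsB{A}$ is open. Item (5) is a one-liner given (1): $A$ is closed by definition of a minimal set, $\opn{clR}$ is \acrshort{usc} by hypothesis, so Proposition \ref{prop:multi facts}(4) says $\wLsB{A} = \opn{clR}^{-} \multi A$ is closed. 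The main obstacle I anticipate is item (4): the hypothesis $a \in \closure{\reach[\F,x]}$ is purely pointwise and must be upgraded to an open neighborhood of $x$ sitting inside $\wLsB{A}$. This upgrade requires the \acrshort{lsc} hypothesis on $\F$ (to make $(\F^{\circ n})^{-} \multi W$ open) together with the invariance identity in (3) (to absorb the extra reachability step introduced by passing from $x$ to a point in the preimage).
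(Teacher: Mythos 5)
Your proposal is correct, and it is more self-contained than the paper's own proof: the paper disposes of items (1)--(3) by citing Theorem~15 of an external thesis, whereas you give direct arguments resting on the single observation (via Proposition~\ref{prop:multi facts}(1) and Proposition~\ref{prop:minimal sets}) that $\closure{\reach[\F,x]}$ is closed and invariant, so that any $y \in A \cap \closure{\reach[\F,x]}$ forces $A = \closure{\reach[\F,y]} \subseteq \closure{\reach[\F,x]}$; these arguments are sound, including the care you take with non-open basis elements in item~(2). For item~(4) the two proofs share the same skeleton---both hinge on $A \subseteq \wLsB A$ and on the invariance identity of item~(3)---but extract openness differently: the paper proves the exact identity $\wLsB A = \reach^{-}\multi{V}$ for the given open $V \ni a$ (squeezing with item~(2) on one side and item~(3) on the other) and then invokes \acrshort{lsc} of $\reach$ once, whereas you show directly that each $x \in \wLsB A$ admits the open neighborhood $\round{\F^{\circ n}}^{-}\multi{W} \subseteq \wLsB A$ for a suitable $n$. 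Since $\reach^{-}\multi{W} = \union_{n}\round{\F^{\circ n}}^{-}\multi{W}$, these are essentially the same computation; the paper's version yields the slightly stronger conclusion that $\wLsB A$ equals a single lower preimage, while yours bypasses item~(2) entirely. Item~(5) is identical in both. The only omission worth fixing: in item~(4) you should state explicitly that the ``open implies neighborhood'' direction is the trivial one because $A \subseteq \wLsB A$, which your item~(1) already supplies since $\closure{\reach[\F,a]} = A$ for every $a \in A$.
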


\begin{proof}
Items~(1), (2) and (3) are shown in Theorem~15 of~\cite{thesis}.
To prove item~(4), notice that $A \subseteq \wLsB A$ (this can be seen from item~(1) and the fact $A$ is \gls{sub-invariant}). So if $\wLsB A$ is open, it must be a neighborhood of a point of $A$.

 Conversely, if for some $a \in A$ there is open $V \ni a$ with $ V \subseteq \wLsB A$, then we can apply $\reach^{-}$ to both sides of this to yield
 \[
 \reach^{-} \multi V \subseteq \reach^{-} \multi {\wLsB A} \subseteq \opn{clR}^{-} \multi {\wLsB A} = \wLsB A
 \]
 by item~(4). By item~(3) and taking $\localB a$ to be the set of all open neighborhoods of $a$, we have $\wLsB A= \bigcap_{W \in \localB a} \reach^{-} \multi {W} \subseteq  \reach^{-} \multi V $. Hence, $\wLsB A = \reach^{-} \multi V $ and since $\reach$ is \acrshort{lsc}, $\reach^{-} \multi V $ is open.

Item (5) is trivial when one recalls item~(\ref{item:usc closed}) of Proposition \ref{prop:multi facts}. Hence, for every closed set $C\subseteq X$ we have that $\opn{clR}^{-} \multi C$ is closed. So we apply item~(1) of this theorem and recall that $A$ is closed to conclude the result. ~
\end{proof}

%%%%%%meat
\section{Necessary Conditions on Robustness in Compact Spaces}\label{sec:ness}

For the purposes of this section, we will typically be working in a connected compact metric space $X$ and considering a robust multifunction $\F$; we will call $\F$ robust if for all $x \in X$ the set  $\reach[\F,x]$ is robust; that is, $\closure{\reach[\F,x]}=\chainreach[\F,x]$. From a mathematical point of view, this ends up being a strong condition.

\begin{lem}\label{lem:characterized robust}
Let $\round{X,\dist}$ be a compact metric space and $\F:X \multito X$ be a multifunction. Then for all $x \in X$:

 $\chainreach[x] =\closure{\reach[x]}$ if and only if, for every $\epsilon >0$, there is a $\delta>0$ for which
\[
\reach[\F_\delta,x] = \union_{n=0}^\infty \F_\delta^{\circ n} \multi x \subseteq  \reach_\epsilon \multi x.
\]
\end{lem}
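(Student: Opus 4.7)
The plan is to establish the biconditional by treating each direction separately; the $(\Leftarrow)$ direction is essentially a rewriting exercise, while the $(\Rightarrow)$ direction requires compactness and a chain-stitching trick. Throughout, I will rely on two identities: the elementary $\closure{\reach[x]} = \bigcap_{\epsilon > 0} \reach_\epsilon \multi x$ (closure equals the intersection of open $\epsilon$-neighborhoods), and the characterization $\chainreach[x] = \bigcap_{\epsilon > 0} \reach[\F_\epsilon, x]$ stated in the preliminaries. The trivial inclusion $\reach[x] \subseteq \reach[\F_\epsilon, x]$ for every $\epsilon > 0$ gives $\reach[x] \subseteq \chainreach[x]$, and a short chain-splicing argument (attach any limit point as a new endpoint, absorbing the extra distance into the tolerance) shows that $\chainreach[x]$ is closed, so $\closure{\reach[x]} \subseteq \chainreach[x]$ holds unconditionally; neither direction of the lemma will need to do this work again.

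For the $(\Leftarrow)$ direction I would take $y \in \chainreach[x]$ and an arbitrary $\epsilon > 0$, then apply the hypothesis to pick $\delta > 0$ with $\reach[\F_\delta, x] \subseteq \reach_\epsilon \multi x$. Since $y \in \bigcap_{\epsilon' > 0} \reach[\F_{\epsilon'}, x] \subseteq \reach[\F_\delta, x]$, it follows that $y \in \reach_\epsilon \multi x$, and as $\epsilon > 0$ was arbitrary this places $y \in \closure{\reach[x]}$.

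For the $(\Rightarrow)$ direction I would argue by contrapositive. Supposing the inclusion fails, there exists $\epsilon > 0$ such that for each $n \in \N$ one can choose $y_n \in \reach[\F_{1/n}, x]$ with $\dist\round{y_n, \reach[x]} \geq \epsilon$. Compactness of $X$ yields a subsequence $y_{n_k} \to y$, and continuity of the distance-to-a-set function forces $\dist\round{y, \reach[x]} \geq \epsilon$, so $y \notin \closure{\reach[x]}$. The main obstacle is then to show that $y \in \chainreach[x]$, producing a contradiction. To this end I would fix arbitrary $\epsilon' > 0$, pick $k$ large enough that $1/n_k < \epsilon'/2$ and $\dist\round{y, y_{n_k}} < \epsilon'/2$, and start from any $1/n_k$-chain $x = z_0, z_1, \ldots, z_N = y_{n_k}$. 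Replacing the terminal point $z_N$ by $y$ produces a sequence whose last jump lies within $\dist\round{y, y_{n_k}} + 1/n_k < \epsilon'$ of $\F[z_{N-1}]$, so the stitched sequence is an $\epsilon'$-chain from $x$ to $y$; since $\epsilon'$ was arbitrary, $y \in \chainreach[x]$, contradicting $\chainreach[x] = \closure{\reach[x]}$. The crux of the proof is thus the interplay between compactness (which produces the limit $y$, and is the only place the hypothesis on $X$ is used) and the chain-stitching observation that small terminal perturbations can always be absorbed at the cost of a slightly larger tolerance.
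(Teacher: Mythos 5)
Your proof is correct and follows essentially the same route as the paper's: the easy direction is the identical intersection argument via $\closure{\reach[x]}=\bigcap_{\epsilon>0}\reach_\epsilon\multi x$ and $\chainreach[x]=\bigcap_{\epsilon>0}\reach[\F_\epsilon,x]$, and the hard direction uses compactness to produce a point of $\chainreach[x]$ outside $\closure{\reach[x]}$. The only cosmetic difference is that the paper packages your chain-stitching step into the standalone identity $\chainreach[x]=\bigcap_{\delta>0}\closure{\reach[\F_\delta,x]}$ and then applies the finite intersection property to the nested compact sets $\closure{\reach[\F_\delta,x]}\cap\round{X\setminus\reach_\epsilon\multi x}$, whereas you use sequential compactness and stitch the chain directly at the limit point.
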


\begin{proof}
To begin, we claim that $\chainreach[x] = \bigcap_{\delta >0} \closure{\reach[\F_\delta,x]}$. Since
\[
\chainreach[x] =  \bigcap_{\epsilon >0} \reach[\F_\epsilon,x],
\]
the $\subseteq$ inclusion is immediate.
Thus suppose that $y \in \bigcap_{\delta >0} \closure{\reach[\F_\delta,x]}$ and $\epsilon >0$ is arbitrary then $ y \in \closure{\reach[\F_{\frac{\epsilon}{2}},x]} = \closure{ \union_{n=0}^\infty \F_{\frac{\epsilon}{2}}^{\circ n} \multi x}$ and so $\B {\frac{\epsilon}{2}} y \cap { \union_{n=0}^\infty \F_{\frac{\epsilon}{2}}^{\circ n} \multi x} \neq \emptyset$.
It follows that $y \in \B {\frac{\epsilon}{2}} {\F^{\circ n}_{\frac{\epsilon}{2}} \multi x} =\F_{{\epsilon}} \multi {\F^{\circ n-1}_{\frac{\epsilon}{2}}\multi x} $ for some $n$ and since every ${\frac{\epsilon}{2}}$-chain is an $\epsilon$-chain, we have that $\F^{\circ n-1}_{\frac{\epsilon}{2}} \subseteq \F^{\circ n-1}_{\epsilon} $.
Hence, $y \in{\union_{n=0}^\infty \F_{{\epsilon}}^{\circ n} \multi x}$ for all $\epsilon >0$ and  $\chainreach[x] = \bigcap_{\delta >0} \closure{\reach[\F_\delta,x]}$.
%It follows from triangle inequality that $y \in { \union_{n=0}^\infty \F_{{\epsilon}}^{\circ n} \multi x}$.

Suppose that there is $\epsilon >0$ for all $\delta >0$ such that $\reach[\F_\delta,x]  \cap X\setminus \reach_\epsilon \multi x \neq \emptyset$. Define $A^\delta = \closure{\reach[\F_\delta,x]}\cap X\setminus \reach_\epsilon \multi x$. Then the family of sets $\set{A^\delta: \delta >0}$ is a family of compact sets with the finite intersection property, because the sets are nested and nonempty. Thus,
\begin{align*}
\emptyset \neq \bigcap_{\delta >0} A^\delta &=  \round{\bigcap_{\delta >0} \closure{\reach[\F_\delta,x]}}\cap X \setminus \reach_\epsilon \multi x \\
&= \chainreach[x] \cap X \setminus \reach_\epsilon \multi x
\end{align*}
by the above claim.
We see that $\emptyset \neq \chainreach[x] \cap X\setminus \reach_\epsilon \multi x \subseteq \chainreach[x] \cap X\setminus \reach \multi x$ and so $\chainreach[x] \neq \reach[x]$.

Conversely, suppose that for all $\epsilon >0$ there is a $\delta >0$ with $\reach[\F_\delta,x] \subseteq \reach_\epsilon \multi x$. It can be shown that $\bigcap_{\delta >\eta>0} \reach[\F_\eta,x] = \chainreach[\F,x]$ and so we see that $  \chainreach[\F,x] \subseteq \reach_\epsilon \multi x$ for all $\epsilon>0$.
Since $ \bigcap_{\epsilon >0} \reach_\epsilon \multi x = \closure{\reach[x]}$, we have $\chainreach[x] = \closure{\reach[x]}$ (noting that the inclusion $\chainreach[x] \supseteq \closure{\reach[x]}$ always holds).
~ 
\end{proof}

It's unclear to us how to interpret of the $\epsilon$-$\delta$ condition in the above lemma. % Perhaps, it says that every infinite $\delta$ chain must stay close to the reachable set for a sufficiently small $\delta$. We notice this condition is only necessary when $X$ is compact, but is always sufficient.
Certainly, the condition has implications on safety problems. We say $\multi{\F,x}$ is safe if $\reach[\F,x] \subseteq S$ where $S \subseteq X$ is interpreted as a ``safe'' set. If the $\epsilon$-$\delta$ condition is satisfied for $\multi{\F,x}$ and, for some $\epsilon >0$, we have that $\reach_\epsilon \multi{x} \subseteq S$ (i.e., $\multi{\F,x}$ is $\epsilon$-safe), then the $\delta$-perturbed system $\multi{\F_\delta,x}$ is safe, since $ \reach[{\F_\delta,x}] \subseteq \reach_\epsilon \multi{x} \subseteq S$. That is, $\epsilon$-safety of $\multi{\F,x}$ implies safety of $\multi{\F_\delta,x}$ for some $\delta>0$. This contrasts to a common use of these $\delta$-perturbed systems: since $\reach[\F,x] \subseteq \reach[\F_\delta,x]$ for any $\delta >0$, if $\multi{\F_\delta,x}$ is safe, then $\reach[\F,x]$ is safe. In other words,  $\delta$-perturbed systems can be used to determine the safety of the real system. In contrast, for chain reachable robust systems, the safety of the  $\delta$-perturbed system is guaranteed by the $\epsilon$-safety of the real system.
%The dynamics of Equation~(\ref{control dynamics}) are

\begin{lem}\label{lem:discount usc}
Let $\round{X,\dist}$ be a metric space and $\F: X \multito X$ be a \acrshort{usc} multifunction. %For all $\epsilon >0$ define $\f_\epsilon: X \multito X$ by $\f_\epsilon \multi x = \B \epsilon {\f[x]}$.
Then for all $\epsilon >0$ and all compact sets $C \subseteq X$, there is $\delta >0$ for all  $n \in \N$ such that
\[
 \F_\delta^{\circ n} \multi {\B \delta C} \subseteq \F_\epsilon^{\circ n} \multi {C}.
\]
Note that $\F_\epsilon^{\circ n}$ is the $n$-fold composition of $\F_\epsilon$ and not the epsilon enlargement of $\F^{\circ n}$.
\end{lem}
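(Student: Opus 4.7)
The plan is to induct on $n$ with a single choice of $\delta = \delta(\epsilon, C)$, observing that the full lemma reduces to a \emph{single} application of upper semicontinuity at the base case. After that first step, each subsequent $\delta$-enlargement is already contained in the ambient $\epsilon$-enlargement, so no compounding \acrshort{usc} estimate is needed.

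To pick $\delta$, I would invoke Proposition~\ref{prop:multi facts}(2): since $\F$ is \acrshort{usc} and $C$ is compact, there exists $\delta_0 > 0$ with $\F[\B{\delta_0}{C}] \subseteq \F_{\epsilon/2}\multi{C} = \B{\epsilon/2}{\F[C]}$. Then set $\delta := \min\{\delta_0,\, \epsilon/2\}$. This single $\delta$ will be shown to work for every $n$.

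For the base case $n=1$, I would expand $\F_\delta\multi{\B{\delta}{C}} = \B{\delta}{\F[\B{\delta}{C}]}$ and chain two bounds: the inner $\F[\B{\delta}{C}]$ is absorbed into $\B{\epsilon/2}{\F[C]}$ by the choice of $\delta_0$, and the outer $\B{\delta}{\cdot}$ is absorbed into $\B{\epsilon}{\cdot}$ since $\delta \leq \epsilon/2$. For the inductive step, assuming the containment at level $n$, I would apply $\F_\delta$ to both sides (monotone under set inclusion), rewrite $\F_\delta\multi{\F_\epsilon^{\circ n}\multi{C}} = \B{\delta}{\F[\F_\epsilon^{\circ n}\multi{C}]}$, and replace $\B{\delta}{\cdot}$ by $\B{\epsilon}{\cdot}$ using $\delta \leq \epsilon$, obtaining $\F_\epsilon^{\circ (n+1)}\multi{C}$.

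The main conceptual trap --- essentially the only reason the lemma requires care at all --- is the temptation to re-shrink $\delta$ at every step and accumulate a compounding error, which would produce a $\delta$ depending on $n$ and destroy the required uniformity. The crucial observation that sidesteps this is that upper semicontinuity needs to be invoked \emph{exactly once}, to absorb the initial thickening $\B{\delta}{C}$; thereafter the iterate is already trapped inside the $\epsilon$-thickened orbit $\F_\epsilon^{\circ n}\multi{C}$, which by construction tolerates any further $\delta$-perturbation of size at most $\epsilon$.
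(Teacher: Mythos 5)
Your proposal is correct and follows essentially the same route as the paper's own proof: a single application of upper semicontinuity on the compact set $C$ to get $\delta_0$ with $\F[\B{\delta_0}{C}] \subseteq \F_{\epsilon/2}\multi{C}$, the choice $\delta = \min\{\delta_0, \epsilon/2\}$, a triangle-inequality absorption for the base case, and an induction step that only uses monotonicity and $\F_\delta \subseteq \F_\epsilon$. Your observation that upper semicontinuity is invoked exactly once, so that $\delta$ is uniform in $n$, is precisely the point of the paper's argument.
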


\begin{proof}
Given $\epsilon >0$, we will construct $\delta$ for $n=1$ and then proceed by induction. Since $\F$ is \acrshort{usc} and $C$ is compact, we get $\delta_1 >0$ so that
\[
 \F \multi {\B {\delta_1} C} \subseteq \B {\frac{\epsilon}{2}} {\F[C]} = \F_{\frac{\epsilon}{2}} \multi {C}.
\]
Now pick $\delta = \min\set{\delta_1, {\frac{\epsilon}{2}}}$ and consider $y \in  \F_\delta \multi{\B \delta C}$. So there is a $y' \in \F[\B \delta C] \subseteq \F_{\frac{\epsilon}{2}} \multi C $ with $\dist[y,y'] < \delta$. Hence

\[
\dist[y, \F[C] ] \leq \dist[y,y'] +\dist[y', \F[C]] < \delta + \frac{\epsilon}{2} < \epsilon,
\]
where $\dist[x,A] := \inf_{a \in A} \dist[x,a]$ for $\emptyset \neq A \subseteq X$ and we have $y \in\F_{\epsilon} \multi {C}$.

So the base case $n=1$ is satisfied for this choice of $\delta$. Assume that for $n$ we have that
\[
 \F_\delta^{\circ n} \multi {\B \delta C} \subseteq \F_\epsilon^{\circ n} \multi {C}.
\]
Applying $\F_\epsilon$ to both sides of this equation and noticing that $\F_\delta \subseteq \F_\epsilon$, we have that
\[
 \F_\delta^{\circ (n+1)} \multi {\B \delta C} \subseteq \F_\epsilon \circ \F_\delta^{\circ n} \multi {\B \delta C} \subseteq \F_\epsilon^{\circ (n+1)} \multi {C}.
\]
Thus, the result holds by induction. ~
\end{proof}

The lemma above allows us to show that the $\epsilon$-chains in the definition of the chain reachable set are allowed to have initial points within $\epsilon$ distance of a point in the initial set.

\begin{prop}\label{prop:chain reach}
Let $\round{X,\dist}$ be a metric space, $\F: X \to X$ be a \acrshort{usc}  multifunction and $C$ a  compact set of $X$.
Then
\begin{dmath*}
\chainreach[\F,C] = \bigcap_{\epsilon >0} \union_{n =0}^\infty \F_\epsilon^{\circ n} \multi C =\bigcap_{\epsilon >0} \union_{n =0}^\infty \F_\epsilon^{\circ n} \multi {\B \epsilon C}.
\end{dmath*}
\end{prop}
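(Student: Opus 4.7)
The plan is to note that the first equality is just the already-stated characterization of $\chainreach$ (for compact $C$; strictly, it was written for $C=\set{x}$ earlier, but the same argument gives the set version by distributing unions over $x \in C$). So the whole content is the second equality, which compares $\epsilon$-chains starting in $C$ to $\epsilon$-chains starting within $\epsilon$ of $C$.

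The $\subseteq$ direction is immediate from monotonicity: since $C \subseteq \B{\epsilon}{C}$, we have $\F_\epsilon^{\circ n}\multi C \subseteq \F_\epsilon^{\circ n}\multi{\B{\epsilon}{C}}$ for each $n$ and $\epsilon$, and intersecting over $\epsilon$ preserves the inclusion. The real work is the $\supseteq$ direction, and the natural tool is Lemma \ref{lem:discount usc}, which is exactly designed for this: for any $\epsilon>0$ and compact $C$, there is $\delta>0$ such that $\F_\delta^{\circ n}\multi{\B{\delta}{C}} \subseteq \F_\epsilon^{\circ n}\multi C$ for every $n \in \N$.

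Given this, take $y \in \bigcap_{\eta>0}\union_{n=0}^\infty \F_\eta^{\circ n}\multi{\B{\eta}{C}}$ and fix an arbitrary $\epsilon>0$. Choose $\delta$ as in Lemma \ref{lem:discount usc} for this $\epsilon$ and $C$. Applying the hypothesis at $\eta=\delta$ gives some $n$ with $y \in \F_\delta^{\circ n}\multi{\B{\delta}{C}}$, whence $y \in \F_\epsilon^{\circ n}\multi C \subseteq \union_{m=0}^\infty \F_\epsilon^{\circ m}\multi C$. Since $\epsilon>0$ was arbitrary, $y$ lies in the middle intersection, which equals $\chainreach[\F,C]$.

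The only subtlety I anticipate is making sure the hypotheses of Lemma \ref{lem:discount usc} are honestly in force: we need $\F$ \acrshort{usc} (given) and $C$ compact (given), so everything goes through cleanly. There is no obstacle beyond invoking the lemma at the right $\delta$; the proof is essentially a one-line reduction once that lemma is available, which is presumably why it was stated immediately before this proposition.
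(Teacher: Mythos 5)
Your proposal is correct and matches the paper's approach exactly: the paper's entire proof is the remark that the result is ``a straightforward application of the previous lemma,'' i.e.\ Lemma~\ref{lem:discount usc}, which is precisely the reduction you carry out. The only microscopic point you might note is that the lemma's inclusion is established for $n\geq 1$, so the $n=0$ term ($y\in\B \delta C$ for all $\delta$, hence $y\in C$ by closedness of $C$) should be disposed of separately, but this is trivial.
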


\begin{proof} The proof is a straightforward application of the previous lemma. ~
%The inclusion $\bigcap_{\epsilon >0} \union_{n=0}^\infty \F_\epsilon^{\circ n} \multi C \subseteq \bigcap_{\epsilon >0} \union_{n=0}^\infty \F_\epsilon^{\circ n} \multi {\B \epsilon C}$ is evident.
%
%Suppose that %$y \in \bigcap_{\epsilon >0} \union_{n=0}^\infty \f_\epsilon^{\circ n} \multi {\B \epsilon C}$ and
%$\epsilon >0$ then let $\delta_\epsilon>0$ be the delta given by Lemma \ref{lem:discount usc}. Hence we have that
%\[
%\union_{n=0}^\infty \F_{\delta_\epsilon}^{\circ n} \multi {\B {\delta_\epsilon} C} \subseteq  \union_{n=0}^\infty \F_\epsilon^{\circ n} \multi {C}
%\]
%as we can take $\delta_\epsilon<\epsilon$ we have that
%\[
%\bigcap_{\epsilon >0} \union_{n=0}^\infty \F_\epsilon^{\circ n} \multi {\B \epsilon C} = \bigcap_{\epsilon >0} \union_{n=0}^\infty \F_{\delta_\epsilon}^{\circ n} \multi {\B {\delta_\epsilon} C} \subseteq  \bigcap_{\epsilon >0} \union_{n=0}^\infty \F_\epsilon^{\circ n} \multi {C}.
%\]
%This completes the proof.
\end{proof}

We now can show some necessary conditions on chain reachable robustness.

\begin{thm}\label{thm:necessary continuity}
Let $\round{X,\dist}$ be a compact metric space and $\F:X \multito X$ be a robust \acrshort{usc}  multifunction. Then the following hold:
\begin{enumerate}
\item{$\closure{R}$ is \acrshort{usc}} %and \acrshort{osc}}
\item{$\closure{R}$ (and ${R}$) is \acrshort{lsc} whenever $\F$ is \acrshort{lsc}}
\end{enumerate}
\end{thm}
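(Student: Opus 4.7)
The plan is to treat the two items separately, since item~(2) follows almost immediately from material in Section~\ref{sec:prelim} while item~(1) is the substantive one that uses robustness.

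For item~(2), I would simply invoke item~(\ref{item:reach lsc}) of Proposition~\ref{prop:multi facts}: whenever $\F$ is \acrshort{lsc}, both $\reach$ and $\closure{\reach}$ are \acrshort{lsc} in $x$, independent of any robustness assumption. So no new work is required.

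For item~(1), I would verify the pointwise characterization of \acrshort{usc} given by item~(\ref{item:point usc}) of Proposition~\ref{prop:multi facts}. This is available because $X$ is compact, so the multifunction $\closure{\reach}$ is closed-valued and hence compact-valued. Fix $x \in X$ and $\epsilon > 0$; the goal is to find $\delta > 0$ with $\closure{\reach}[\B \delta x] \subseteq B_\epsilon(\closure{\reach[x]})$. My plan is a two-stage construction of $\delta$. First, by robustness and Lemma~\ref{lem:characterized robust} applied at $x$ with tolerance $\epsilon/2$, I get $\delta_1 > 0$ such that $\reach[\F_{\delta_1}, x] \subseteq \reach_{\epsilon/2}[x] \subseteq B_{\epsilon/2}(\closure{\reach[x]})$. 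Second, applying Lemma~\ref{lem:discount usc} with the compact set $C = \{x\}$ and target tolerance $\delta_1$, I obtain $\delta > 0$ for which $\F_\delta^{\circ n}[\B \delta x] \subseteq \F_{\delta_1}^{\circ n}[x]$ for every $n \in \N$. In particular, since $\F \subseteq \F_\delta$, the uniform inclusion $\F^{\circ n}[\B \delta x] \subseteq \F_{\delta_1}^{\circ n}[x]$ holds for all $n$.

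Taking the union over $n$, for any $y \in \B \delta x$ I would conclude $\reach[y] \subseteq \reach[\F_{\delta_1}, x] \subseteq B_{\epsilon/2}(\closure{\reach[x]})$. Passing to closures, $\closure{\reach[y]} \subseteq \closure{B_{\epsilon/2}(\closure{\reach[x]})} \subseteq B_\epsilon(\closure{\reach[x]})$, where the last inclusion uses the elementary fact that $\closure{B_r(A)} \subseteq \{z : \dist[z,A] \leq r\} \subseteq B_{r'}(A)$ whenever $r < r'$. This gives the desired u.s.c.\ condition.

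The only place where anything delicate happens is securing a single $\delta$ that controls the $n$-fold iterates $\F^{\circ n}[\B \delta x]$ \emph{uniformly} in $n$; a naive iteration of u.s.c.\ of $\F$ would produce $\delta_n$ depending on $n$ and collapse as $n \to \infty$. This uniformity is precisely the content of Lemma~\ref{lem:discount usc}, so with that lemma plus the $\epsilon$--$\delta$ reformulation of robustness from Lemma~\ref{lem:characterized robust} in hand, the argument becomes routine.
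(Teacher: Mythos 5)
Your proposal is correct and follows essentially the same route as the paper's own proof: item~(2) is dispatched by item~(\ref{item:reach lsc}) of Proposition~\ref{prop:multi facts}, and item~(1) combines Lemma~\ref{lem:characterized robust} (to get $\delta_1$ from $\epsilon/2$) with Lemma~\ref{lem:discount usc} (to get a single $\delta$ controlling all iterates uniformly in $n$), then verifies the pointwise u.s.c.\ criterion of item~(\ref{item:point usc}) of Proposition~\ref{prop:multi facts}. The only cosmetic difference is the order in which you invoke the two lemmas, which does not affect the argument.
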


\begin{proof}
To prove~(1), notice that by Lemma~\ref{lem:discount usc}, for a given $\delta_1>0$,  there is a $\delta >0$ such that $\reach[\F_\delta,\B \delta x] \subseteq \reach[\F_{\delta_1},x]$ and so by Lemma~\ref{lem:characterized robust} we have for all $\epsilon>0$ there is $\delta>0$ such that
\[
\reach[\F_\delta,\B \delta x] \subseteq \reach_{\frac{\epsilon}{2}} \multi x.
\]
Thus, we also see
\begin{align*}
\closure{\reach}\multi {\B \delta x} \subseteq \closure{\reach[\F_\delta,\B \delta x]} \subseteq \closure{ \reach_{\frac{\epsilon}{2}} \multi x } &\subseteq \reach_{{\epsilon}} \multi x \\
& = \closure{\reach}_{{\epsilon}} \multi x.
\end{align*}
Therefore, by item~(\ref{item:point usc}) of  Proposition~\ref{prop:multi facts}, $\closure{\reach}$ is \acrshort{usc} %Also, by Proposition~\ref{prop:multi facts} $\closure{\reach}$ is \acrshort{osc}, since it is closed valued and \acrshort{usc}.

Item~(2) is a restatement of item~(\ref{item:reach lsc}) of Proposition~\ref{prop:multi facts}. ~
%To prove 2 consider $S \subseteq X$ then by Proposition~\ref{prop:multi facts} $\F[\closure{S}] \subseteq \closure{\F[S]}$ applying $\F$ to both sides of this inclusion then applying this identity again yields $\F^{\circ 2} \multi {\closure{S}} \subseteq {\F[\closure{\F[{S}]}]} \subseteq \closure{\F^{\circ 2} \multi S}$ i.e. $\F^{\circ 2} $ is \acrshort{lsc}. We can do this for all $n \in \N$ so $\F^{\circ n} $ is \acrshort{lsc} for all $n \in \N$. Now consider
%\[
%\reach \multiBracket[\Big]{\closure{S}}= \union_{n =0} \F^{\circ n} \multiBracket[\Big] {\closure{S}} \subseteq \union_{n =0} \closure{\F^{\circ n} \multi S} \subseteq \closure{\union_{n =0} \closure {\F^{\circ n} \multi S}} =  \closure{\union_{n =0}  {\F^{\circ n} \multi S}} = \closure{\reach[S]}.
%\]
%So $\reach$ is \acrshort{lsc} to show $\closure{\reach}$ is as well we can see
%\[
%\closure {\reach} \multiBracket[\Big] {\closure{S}}  \subseteq \closure {\closure {\reach}  \multiBracket[\Big] {\closure{S}} } =  \closure { {\reach} \multiBracket[\Big] {\closure{S}} } \subseteq  \closure { {\reach} \multi S } = \closure { \closure{\reach} \multi S }
%\]
%and so  $\closure{\reach}$ is \acrshort{lsc}.
\end{proof}

The multifunction $\closure{\reach}$ being continuous (in every sense we discuss here) ends up being a rather strong condition. In particular, it implies some strange things about the minimal sets of $\F$. In Subsection~\ref{subsec:minimal sets}, Propositions~\ref{prop:minimal sets} and~\ref{prop:usc stable} showed that there are minimal sets of $\F$ all of which are Lyapunov stable if $\closure{\reach}$ is \acrshort{usc} and $X$ is compact. When we consider the simpler case of $\F=\set{\f}$ being single-valued with $\f$ continuous and assume all minimal sets are fixed points of $\f$, the fact that \emph{all} the fixed points are Lyapunov stable is already a strong condition. Already, we can tell $\f[x] =x^2$ on $[0,1]$ is not robust since $\bar x = 1$ is not Lyapunov stable. This necessity of Lyapunov stability actually gets stranger. Theorem~\ref{thm:weak basin} gives conditions for the set $\wLsB A$ to be both open and closed. Already we can tell that $\wLsB A$ is closed since $\closure{\reach}$ is \acrshort{usc} But under the assumption that the minimal sets are bounded away from each other we can also show that $\wLsB A$ is open.

\begin{lem}\label{lem:isolated minimal}
Let $\round{X,\dist}$ be a compact metric space and $\F:X \multito X$ be a \acrshort{lsc} multifunction with $\opn{clR}$ being \acrshort{usc} Suppose that $A$ is a minimal set of $\F$ for which there is an open set $V\supseteq A$ such that $V$ contains no other minimal sets except $A$; that is, $A$ is isolated from other minimal sets. Then $\wLsB A$ is open.
\end{lem}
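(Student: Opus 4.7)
The plan is to show that $\wLsB A$ is a neighborhood of some point $a \in A$, since item~(4) of Theorem~\ref{thm:weak basin} will then immediately give that $\wLsB A$ is open. So I aim to find a single $a \in A$ and a radius $\delta >0$ with $\B \delta a \subseteq \wLsB A$.

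First I would shrink $V$ to a tubular neighborhood of the form $A_\epsilon$: since $A$ is compact and $V$ is open with $A \subseteq V$, there exists $\epsilon >0$ with $A_\epsilon \subseteq V$. Fix any $a \in A$; because $A$ is a minimal set of the \acrshort{lsc} multifunction $\F$, Proposition~\ref{prop:minimal sets} gives $\closure{\reach[a]} = A$. Since $X$ is compact, $\closure{\reach}$ is automatically compact-valued, so the \acrshort{usc} assumption together with item~(\ref{item:point usc}) of Proposition~\ref{prop:multi facts} applied to $\closure{\reach}$ at $a$ yields $\delta >0$ such that
\[
\closure{\reach}\multi{\B \delta a} \subseteq \closure{\reach}_\epsilon \multi a = A_\epsilon \subseteq V.
\]

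Next I would exploit this inclusion to force $A$ into $\closure{\reach[x]}$ for every $x \in \B \delta a$. For such an $x$ the set $\closure{\reach[x]}$ is a closed, hence compact, subset of $X$ sitting inside $V$. By the ``furthermore'' clause of Proposition~\ref{prop:minimal sets}, $\closure{\reach[x]}$ contains a compact minimal set $B$ of $\F$. But $B \subseteq V$, and $V$ contains no minimal set other than $A$ by hypothesis, so $B = A$. This gives $A \subseteq \closure{\reach[x]}$, i.e., $x \in \wLsB A$. Hence $\B \delta a \subseteq \wLsB A$, so $\wLsB A$ is a neighborhood of $a$ and item~(4) of Theorem~\ref{thm:weak basin} finishes the argument.

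The whole argument is a short combination of the tools already assembled, so there is no single difficult step; the main subtlety is merely recognizing the right way to string them together, namely: use $\closure{\reach[a]}=A$ to pass the \acrshort{usc}-at-$a$ statement from a generic $\epsilon$-enlargement of $\closure{\reach[a]}$ to the prescribed neighborhood $V$ of $A$, and then convert the inclusion $\closure{\reach[x]} \subseteq V$ into membership in $\wLsB A$ via minimal-set isolation.
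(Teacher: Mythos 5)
Your proof is correct and follows essentially the same route as the paper: obtain a neighborhood of (a point of) $A$ on which $\closure{\reach}\multi{x}\subseteq V$ via upper semicontinuity of $\opn{clR}$, deduce from compactness and the isolation hypothesis that the minimal set inside $\closure{\reach[x]}$ must be $A$, and conclude via item~(4) of Theorem~\ref{thm:weak basin}. The only cosmetic difference is that the paper takes the open set $\opn{clR}^{+}\multi{V}$ directly, whereas you build a ball $\B{\delta}{a}$ from the pointwise $\epsilon$-$\delta$ characterization of upper semicontinuity.
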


\begin{proof}
Let $x$ be in the open invariant set $\opn{clR}^+ \multi V \supseteq A$. Then by Proposition~\ref{prop:minimal sets} and compactness we know that $\opn{clR} \multi x \subseteq V$ contains a minimal set. This minimal set must be $A$ by the assumption that $A$ is isolated (i.e if $B$ is minimal and $B \subseteq \opn{clR} \multi x $ then $B\subseteq V$ but $A$ is the only minimal set in $V$). Thus, by definition $x\in \wLsB A$ and so $A \subseteq \opn{clR}^+ \multi V \subseteq \wLsB A$. Hence, $\wLsB A$ contains a neighborhood of $A$ and so $\wLsB A$ is open by Proposition~\ref{prop:minimal sets}. ~
\end{proof}

\begin{thm}\label{thm:unique or infinite}
Let $\round{X,\dist}$ be a compact connected metric space and $\F:X \multito X$ is a robust continuous  multifunction. Then either:
\begin{enumerate}
\item{ $\F$ possesses a unique minimal set that is Lyapunov stable.}
\item{ $\F$ possesses infinitely many minimal sets (every minimal set is  Lyapunov stable). Further, for every minimal set $A$ and every open $V\supseteq A$, there is  a minimal set $B$ with $B \subseteq V \setminus A$.}
\end{enumerate}
In the first case, if in addition $\F=\set{\f}$ is single-valued, then the unique minimal set is globally attractive; for all $x \in X$ we have that $\bigcap_{N \in \N} \closure{\set{\f^{\circ n} \round x}_{n =N}^\infty}$ is the unique minimal set.
\end{thm}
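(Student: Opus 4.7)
The strategy is to exploit connectedness of $X$ through the clopen property of the basin $\wLsB{A}$. First, since $\F$ is continuous and robust, Theorem~\ref{thm:necessary continuity} yields that $\opn{clR}$ is both \acrshort{usc} and \acrshort{lsc}. By Proposition~\ref{prop:usc stable}, every (compact) minimal set is Lyapunov stable, and by Theorem~\ref{thm:weak basin}~(5), $\wLsB{A}$ is closed for every minimal set $A$. At least one minimal set exists because compactness of $X$ makes every $\closure{\reach[x]}$ compact, so Proposition~\ref{prop:minimal sets} applies.

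The central observation is that if some minimal set $A$ is isolated from the others (i.e., admits an open $V \supseteq A$ containing no other minimal set), then Lemma~\ref{lem:isolated minimal} also makes $\wLsB{A}$ open. Hence $\wLsB{A}$ is a nonempty clopen subset (it contains $A$) of the connected space $X$, and so $\wLsB{A} = X$. If a second minimal set $B$ existed, then for any $b \in B$ we would have $A \subseteq \closure{\reach[\F,b]} = B$ by Proposition~\ref{prop:minimal sets}~(2); but distinct minimal sets are disjoint (their intersection is closed, nonempty, and \gls{sub-invariant}, forcing equality by minimality), so this collapses to $A = B$, a contradiction. Therefore a single isolated minimal set already forces uniqueness, giving case~(1).

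If we are not in case~(1), then no minimal set is isolated. Because any finite collection of disjoint compact minimal sets can be separated by pairwise disjoint open neighborhoods, having only finitely many minimal sets would leave at least one of them isolated; hence there must be infinitely many. For any such minimal set $A$ and any open $V \supseteq A$, non-isolation produces another minimal set $B \subseteq V$, and disjointness yields $B \subseteq V \setminus A$, establishing case~(2). The part I expected to be subtle, namely that different basins $\wLsB{A_i}$ might overlap a priori, never actually needs to be addressed: the clopen-in-connected step sees $\wLsB{A}$ alone and forces it to be all of $X$, which is the source of the rigidity.

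For the ``furthermore,'' specialize to $\F = \set{\f}$ with the unique minimal set $A$. From $\wLsB{A} = X$, every $x \in X$ satisfies $A \subseteq \closure{\reach[\F,x]} = \closure{\set{\f^{\circ n} \round x}_{n \in \N}}$, so the forward orbit's closure meets $A$. The single-valued clause of Proposition~\ref{prop:Lyapunov stable orbit} then gives $\bigcap_{N \in \N} \closure{\set{\f^{\circ n} \round x}_{n = N}^\infty} = A$, which is exactly the claimed global attractivity.
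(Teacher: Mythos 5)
Your proposal is correct and follows essentially the same route as the paper's own proof: the clopen basin $\wLsB{A}$ in a connected space via Theorem~\ref{thm:necessary continuity}, Theorem~\ref{thm:weak basin}, and Lemma~\ref{lem:isolated minimal}, disjointness of distinct minimal sets, and Proposition~\ref{prop:Lyapunov stable orbit} for the single-valued case. The only difference is organizational (you take ``isolated implies unique'' as the pivot rather than ``finitely many implies unique''), and your elaboration of the ``furthermore'' step fills in a detail the paper leaves implicit.
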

\begin{proof}
Either there are finitely many minimal sets or there are infinitely many. Suppose that there are finitely many, and that $A$ is one of these minimal sets. We will show that $\wLsB A$ is a nonempty, closed and open set, then concluding that $\wLsB A=X$ by connectedness. By Theorems~\ref{thm:necessary continuity}~\&~\ref{thm:weak basin}, $\wLsB A$ is closed, it is nonempty since $\emptyset \neq A \subseteq\wLsB A$ and since there are finitely many minimal sets there is an open set of $A$ that contains no other minimal sets. Thus, $\wLsB A$ is also open by Lemma~\ref{lem:isolated minimal} and so $X=\wLsB A$. Now suppose that $B$ is another minimal set of $\F$. Then $b \in B \subseteq \wLsB A$ and by definition of $\wLsB A$ we have that $A \subseteq \closure{\reach}\multi b =B$ (the equality follows from $B$ being minimal and item~(2) of Proposition \ref{prop:minimal sets}). But $B,A$ are minimal, so by definition $A=B$ and $A$ is the unique minimal set in $X$.

In the case where there is at least one isolated minimal set we may apply the above argument. Hence if there are infinitely many minimal sets there can be no isolated minimal sets.
Meaning that, for every minimal set $A$ and open set $V \supseteq A$ there is a minimal set $B$ with $B \subseteq V$ $B\neq A$, since both $B,A$ are minimal if $A\cap B \neq \emptyset$ then $A\cap B$ is a closed nonempty invariant set and $A\cap B \subseteq A$. But $A$ is minimal so $A =A\cap B \subseteq B$ and $B$ is minimal as well, hence, $B=A$ which is a contradiction. Therefore, $A\cap B = \emptyset$ and $B \subseteq V\setminus A$.
%Meaning that, for every minimal set $A$ and open set $V \supseteq A$ there is a minimal set $B$ with $B \cap (V\setminus A) \neq \emptyset$. In particular $\opn{clR}^+ \multi{V}\subseteq V$ is an open set of $A$. Hence, there is a $b \in B\cap(\opn{clR}^+ \multi{V} \setminus A)\subseteq B \cap (V\setminus A)$ by minimality of $B$ and invariance of $\opn{clR}^+ \multi{V}$ we have $B = \opn{clR}\multi b \subseteq \opn{clR}^+ \multi{V}\setminus A \subseteq V\setminus A$.

If $\F$ is single-valued, the result follows from Proposition~\ref{prop:Lyapunov stable orbit}. \qed
\end{proof}

The above theorem gives us a dramatic dichotomy about the number and properties of the minimal sets of a continuous robust multifunction. In our opinion, the case where $\F=\set{\f}$ is single-valued with all of its minimal sets being fixed points is the easiest case to imagine. In this case it may not be immediately clear if any such functions can satisfy item (2), given the stability requirements on the fixed points. The obvious and easy to forget example of such a function is the identity map. With further assumptions, this is  in fact the only example.

\begin{cor}
Let $X=[a,b]\subseteq \R$, with $a<b$, be equipped with the normal metric. Assume that $\f$ is an analytic function whose minimal sets are all fixed points of $\f$.
If $\f$ is robust, then either $\f$ is the identity function on $X$ or $\f$ has a unique attracting fixed point on $X$.
\end{cor}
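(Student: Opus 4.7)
The plan is to invoke Theorem~\ref{thm:unique or infinite} to split into its two cases, and then to use analyticity via the identity theorem to eliminate the second case except when $\f$ is the identity.

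First, I would observe that $X=[a,b]$ is a compact connected metric space, and $\f$, being analytic on $X$, is in particular continuous. Viewing $\F=\set{\f}$ as a single-valued continuous multifunction, robustness of $\f$ puts us in the hypothesis of Theorem~\ref{thm:unique or infinite}, giving the dichotomy: either (1) $\f$ has a unique Lyapunov stable minimal set, or (2) $\f$ has infinitely many minimal sets, each Lyapunov stable, with no isolated one (every open neighborhood of a minimal set contains another).

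In case~(1), the assumption that every minimal set of $\f$ is a fixed point gives that the unique minimal set is a single fixed point, say $\set{p}$. The ``in addition'' clause of Theorem~\ref{thm:unique or infinite} tells us that for every $x\in X$ we have $\bigcap_{N\in\N}\closure{\set{\f^{\circ n}\round x}_{n=N}^\infty}=\set p$, i.e.\ every orbit converges to $p$, so $p$ is a (globally) attracting fixed point. This gives the first alternative of the corollary.

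In case~(2), let $\opn{Fix}(\f)=\set{x\in X:\f[x]=x}$. By hypothesis every minimal set is a fixed point, so the infinitely many minimal sets provided by the theorem are infinitely many distinct points of $\opn{Fix}(\f)$. Since $X=[a,b]$ is compact, Bolzano--Weierstrass yields an accumulation point $x_0\in X$ of $\opn{Fix}(\f)$. The function $g(x)=\f[x]-x$ is analytic on (a neighborhood of) the connected set $X$, and its zero set $\opn{Fix}(\f)$ has the accumulation point $x_0$. By the identity theorem for real analytic functions, $g\equiv 0$ on $X$, so $\f$ is the identity on $X$, which is the second alternative of the corollary.

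The main obstacle is really just the conceptual step of recognizing that the ``no isolated minimal set'' conclusion of Theorem~\ref{thm:unique or infinite}, combined with compactness of $[a,b]$, produces an accumulation point of fixed points, at which point analyticity collapses case~(2) to the identity map. No careful estimates or further dynamical work are required beyond citing the theorem and the identity principle.
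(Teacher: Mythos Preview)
Your proposal is correct and follows essentially the same route as the paper: invoke Theorem~\ref{thm:unique or infinite}, note that the minimal sets are fixed points by hypothesis, and in the infinite case use the identity theorem for analytic functions to force $\f$ to be the identity. The only difference is that you spell out the Bolzano--Weierstrass step and the application of the identity principle to $g(x)=\f[x]-x$ explicitly, whereas the paper's proof leaves these implicit.
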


\begin{proof}
Suppose that $\f$ is robust, by Theorem~\ref{thm:unique or infinite} there are only two cases: either $\f$ has a unique attracting minimal set or $\f$ has infinitely many minimal sets\textemdash none of which are isolated. By assumption, all these minimal sets are fixed points.
It follows from the identity theorem that an analytic function on a connected and compact set with an infinite number of fixed points is the identity function.
%In the first case $\opn{fix}\round \f$ is an infinite set in a compact space so it has a limit point.

%If we are in the first case the we are done, so suppose that we are in the second case.
%Recall the Identity Theorem, it states that if two analytic functions are equal on a set $S\subseteq D$ where $S$ contains a limit point and $D$ is a open connected subset of $\R$ then the two functions are equal on $D$.
%
%Here we will take $D = \round{a,b}$ and $S= \opn{fix}\round \f$. By Theorem~\ref{thm:unique or infinite} the infinite set $\opn{fix}\round \f$ contains no isolated points and so there is $\bar x \in  \opn{fix}\round \f$ with $a \neq \bar x\neq b$ and $\bar x$ is a limit point of $\opn{fix}\round \f$. Thus by the Identity Theorem $\f$ is the identity function on $D$, one can show that $\f[a]=a$ and $\f[b]=b$ with continuity, so $\f$ is the identity function on $X$.
\end{proof}

\section{Discussion and Conclusions}\label{sec:conclusions}
Since chain reachable robustness in compact spaces implies that all minimal sets (specifically fixed points and periodic cycles) must be stable, chain reachable robustness is an unusable condition on any dynamics suspected of having unstable behavior; which is a realistic assumption to have when we do not allow for control.
%We would like to note that the fact of all the minimal sets being stable does not  seem to immediately exclude the possibility that there is chaotic behavior.
Even if we allow control we should would expect that point to point controllability would not hold if all minimal sets are stable (unless the unique minimal set is the space).
%example?

That being said, some ``real'' systems may actually be chain reachable robust and any non-trivial sufficient condition for this would be of interest in order to check for computability of the reachable set. A starting point could be that the functions $\f[x,u]$ are non-expansive functions of $x$ for each $u \in U$, which guarantees the necessary condition $\closure{\reach}$ is \acrshort{usc} and so all the minimal sets of $\F$ would be stable.

% \begin{ack}
% This work was supported in part by the NSERC DG, CRC, and ERA programs.
% \end{ack}

\bibliography{biblio}             % bib file to produce the bibliography
                    \bibliographystyle{acm}                                 % with bibtex (preferred)

%\begin{thebibliography}{xx}  % you can also add the bibliography by hand

%\bibitem[Able(1956)]{Abl:56}
%B.C. Able.
%\newblock Nucleic acid content of microscope.
%\newblock \emph{Nature}, 135:\penalty0 7--9, 1956.

%\bibitem[Able et~al.(1954)Able, Tagg, and Rush]{AbTaRu:54}
%B.C. Able, R.A. Tagg, and M.~Rush.
%\newblock Enzyme-catalyzed cellular transanimations.
%\newblock In A.F. Round, editor, \emph{Advances in Enzymology}, volume~2, pages
%  125--247. Academic Press, New York, 3rd edition, 1954.

%\bibitem[Keohane(1958)]{Keo:58}
%R.~Keohane.
%\newblock \emph{Power and Interdependence: World Politics in Transitions}.
%\newblock Little, Brown \& Co., Boston, 1958.

%\bibitem[Powers(1985)]{Pow:85}
%T.~Powers.
%\newblock Is there a way out?
%\newblock \emph{Harpers}, pages 35--47, June 1985.

%\bibitem[Soukhanov(1992)]{Heritage:92}
%A.~H. Soukhanov, editor.
%\newblock \emph{{The American Heritage. Dictionary of the American Language}}.
%\newblock Houghton Mifflin Company, 1992.

%\end{thebibliography}

%\appendix
%\section{A summary of Latin grammar}    % Each appendix must have a short title.
%\section{Some Latin vocabulary}              % Sections and subsections are supported
%                                                                         % in the appendices.
\end{document}